\newtheorem{thm}{Theorem}[section]
\newtheorem{cor}[thm]{Corollary}
\newtheorem{lemma}[thm]{Lemma}
\newtheorem{prop}[thm]{Proposition}
\theoremstyle{definition}
\newtheorem{defn}[thm]{Definition}
\newtheorem{rem}[thm]{Remark}
\newtheorem{ex}[thm]{Example}
\numberwithin{equation}{section}
\DeclareMathOperator{\supp}{supp}
\DeclareMathOperator{\Sp}{span}
\DeclareMathOperator{\Inv}{Inv}
\begin{document}

\setcounter{page}{1}% ignore this

\begin{center} \Large\textbf{On a generalized $*$-product for copulas}\normalsize \end{center}

\vspace{0.5cm}
\begin{center} Pongpol Ruankong and Songkiat Sumetkijakan\footnote[1]{Corresponding author} \end{center}
\vspace{0.5cm}

\begin{abstract}\footnotesize
\noindent This paper focuses on a generalization of the $*$-product called $\mathbf{C}$-product. This product, first introduced by Durante, Klement and Quesada-Molina, was used to characterize classes of compatible copulas. The $\mathbf{C}$-product of copulas $A$ and $B$ is defined to be an integral of a function which involves the copulas $A$ and $B$ and the family of copulas $\mathbf{C}$. However, measurability of the integrand in the definition is questionable. We will discuss this in details and attempt to re-define the product. Then we derive some properties of the re-defined product.
\vskip 0.2cm
\normalsize
\end{abstract}
\vskip 0.2cm
\noindent Mathematics Subject Classification: 28A35\\
\noindent Keywords: copula, $*$-product, $\mathbf{C}$-product
%\begin{keyword}
%% keywords here, in the form: keyword \sep keyword
%copulas \sep shuffles of Min \sep measure-preserving \sep Sobolev norm
%$*$-product \sep shuffles of copulas \sep measure of dependence
%Keywords: copula, $*$-product, $\mathbf{C}$-product

%% PACS codes here, in the form: \PACS code \sep code

%% MSC codes here, in the form: \MSC code \sep code
%% or \MSC[2008] code \sep code (2000 is the default)
%\MSC 28A35
%\end{keyword}
%\end{frontmatter}
%\medskip

%\noindent
%{\bf Mathematics Subject Classification:} 28A35 % using a comma in order to separate the list

%\smallskip
%\noindent
%{\bf Keywords:} Copula, $*$-Product, $\mathbf{C}$-product  % using a comma in order to separate your list of keywords

%%%%%%%%%%%%%%%%%%%%%%%%%%%%%%%%%%%%%%%%%%
\section{Introduction}\label{intro}

In 1959, the notion of copulas was introduced by Sklar. A copula links joint distribution to its marginals via the following equation where the existence of $C$ is guaranteed for all random variables $X,Y$. \begin{displaymath} F_{XY}(u,v)=C(F_X(u),F_Y(v)). \end{displaymath} Moreover, the copula $C$ is unique if $X$ and $Y$ are continuous random variables. A few decades later, in 1992, Darsow, Nguyen and Olsen \cite{Da} introduced a bilinear operation on the set of copulas called the $*$-product. The product was first invented for the purpose of studying Markov processes via copulas. This product proves to be useful, in general, as an operation on the set of copulas.

In this paper, we focus on one of its generalizations known as $\mathbf{C}$-product, which was introduced by Durante, Klement and Quesada-Molina \cite{Fre} for the purpose of studying Fr\'{e}chet classes. Because it was introduced only recently, not many researches have been done. So, we have very little knowledge on the product. Throughout this paper, we will call it $*_{\mathbf{C}}$ product to emphasize the link it has with the classical $*$-product. Given a family of copulas $\mathbf{C}=\{C_t\}_{t \in [0,1]}$, the $*_\mathbf{C}$ product is defined on the set of copulas as follows:
\begin{displaymath} (A*_{\mathbf{C}}B)(u,v)=\int_0^1{C_t(\partial_2A(x,t),\partial_1B(t,y))}~dt. \end{displaymath} However, it is questionable whether this product is well-defined because of the measurability of the integrand. In fact, it is reasonably easy to construct an example where the integrand is not Lebesgue measurable. The main purpose of this work is to limit the scope of the definition so that the product is well-defined. In order to do that, we restrict our attention to some reasonably large classes of families of copulas which make the integrand Lebesgue measurable for all copulas $A,B$ and for all $x,y \in [0,1]$.

This paper is organized as follows. In Section \ref{prelim}, important definitions and results invloving our work are given. We then discuss about measurability of the integrand in Section \ref{measurable}. In Section \ref{products}, we re-define the product and derive some of its properties. Finally, in Section \ref{one-zero}, we find the identity and zero of $(\mathfrak{C},*_{\mathbf{C}})$.

%%%%%%%%%%%%%%%%%%%%%%%%%%%%%%%%%%%%%%%%%%
\section{Preliminaries}\label{prelim}

In this paper, we are only interested in 2-copulas and 3-copulas. Their definitions are given below. More details on copulas can be found in the classic book \cite{Ne} by Nelsen.

\begin{defn} A \emph{2-copula}, or simply \emph{copula}, is a function $C \colon [0,1]^2 \rightarrow [0,1]$ satisfying the conditions:
\begin{enumerate}
	\item $C(u,0)=C(0,v)=0$ for all $u,v \in [0,1]$.
	\item $C(u,1)=u$ and $C(1,v)=v$ for all $u,v \in [0,1]$.
	\item $C$ is $2$-increasing, i.e., for all $[u_1,u_2]\times[v_1,v_2]\subseteq [0,1]^2$, we have \begin{displaymath} C(u_2,v_2)-C(u_2,v_1)-C(u_1,v_2)+C(u_1,v_1) \ge 0. \end{displaymath}
\end{enumerate}
\end{defn}

\begin{defn} \emph{A 3-copula} is a function $C \colon [0,1]^3 \rightarrow [0,1]$ satisfying the conditions:
\begin{enumerate}
	\item $C(u,v,0)=C(u,0,w)=C(0,v,w)=0$ for all $u,v,w \in [0,1]$.
	\item $C(u,1,1)=u$, $C(1,v,1)=v$ and $C(1,1,w) =w$ for all $u,v,w \in [0,1]$.
	\item $C$ is $3$-increasing, i.e., for all cubes $[u_1,u_2]\times[v_1,v_2]\times[w_1,w_2] \in [0,1]^3$, we have
	\begin{align*}
	&C(u_2,v_2,w_2)-C(u_1,v_2,w_2)-C(u_2,v_1,w_2)-C(u_2,v_2,w_1)+\\
	&C(u_2,v_1,w_1)+C(u_1,v_2,w_1)+C(u_1,v_1,w_2)-C(u_1,v_1,w_1) \ge 0
	\end{align*}
\end{enumerate}
\end{defn}

Let $n$ be either $2$ or $3$. Then, according to Sklar's theorem (see, e.g., \cite{Ne}), for any random vector $(X_1,X_2,\dots,X_n)$, there exists an $n$-copula $C$ which links the joint distribution to its marginals as follows: \begin{displaymath} F_{X_1,\dots,X_n}(u_1,\dots,u_n)=C(F_{X_1}(u_1),\dots,F_{X_n}(u_n)). \end{displaymath} If the $X_i$'s are continuous random variables, then the $n$-copula $C$ is unique. Every $n$-copula is Lipschitz continuous with Lipschitz constant 1, as a consequence, its first-order partial derivatives exist almost everywhere. Moreover, they are bounded between $0$ and $1$, wherever exist.

Throughout this paper, the set of copulas is denoted by $\mathfrak{C}$. In addition, each copula induces a measure on the Borel subsets of $[0,1]^2$ as follows.

\begin{defn} Given a copula $C$, define a set function $\mu_C$ on the set of rectangles $[x_1,x_2]\times[y_1,y_2]$ in $[0,1]^2$ by \begin{displaymath} \mu_C([x_1,x_2]\times[y_1,y_2])=C(x_2,y_2)-C(x_2,y_1)-C(x_1,y_2)+C(x_1,y_1) \ge 0.
\end{displaymath}
By Caratheodory Extension Theorem, the set function $\mu_C$ can be extended to a measure on the Borel $\sigma$-algebra on $[0,1]^2$. Moreover, the measure $\mu_C$ is said to be doubly stochastic, i.e., it satisfies $\mu_C(B \times [0,1])=\mu_C([0,1]\times B)= \lambda(B)$ for every Borel set $B \subseteq [0,1]$, where $\lambda$ denotes Lebesgue measure. This measure is sometimes called \emph{$C$-measure}, \emph{$C$-volume} or \emph{mass of copula C}. 
\end{defn}

\begin{defn} The \emph{support} of a copula $C$, denoted by $\supp{C}$, is defined to be the complement of the union of all open subsets of $[0,1]^2$ with zero $C$-volume.
\end{defn}

Theoretically, the three most important copulas are the Fr\'{e}chet-Hoeffding upper and lower bounds and the product copula given by \begin{center}$M(u,v)=\min(u,v), W(u,v)=\max(u+v-1,0)$ and $\Pi(u,v)=uv$,\end{center} respectively. They represent comonotonicity, countermonotonicity and independence, respectively, between the two random variables.

\begin{ex} It can be shown that $\supp{M}$ is the main diagonal from $(0,0)$ to $(1,1)$, $\supp{W}$ is the other diagonal and $\Pi$ has full support, i.e., $\supp{\Pi}=[0,1]^2$.
\end{ex}

In their study of Markov processes, Darsow, Nguyen and Olsen \cite[p.~604]{Da} introduces a binary operation $\ast \colon \mathfrak{C} \times \mathfrak{C} \rightarrow \mathfrak{C}$ defined by \begin{displaymath}(A\ast B)(u,v) = \displaystyle\int\limits_0^1{\partial_2A(u,t)\partial_1B(t,v)}~dt, \end{displaymath} where $\partial_i$ denotes the first-order partial derivative with respect to the $i$-th variable. This operation is bilinear and is called the \emph{$\ast$-product} on $\mathfrak{C}$. Remark that it can be naturally extended to a bilinear operation on $\Sp\mathfrak{C}$. From straightforward computations, for any $C \in \mathfrak{C}$, we have the following identities:
\begin{align*}
M \ast C &= C \ast M = C,\\
\Pi \ast C &= C \ast \Pi = \Pi.
\end{align*}

Copulas $M$ and $\Pi$ can be viewed as the identity and the zero of $(\mathfrak{C},\ast)$, respectively. Moreover, denoted by $C^T$, the transpose of $C$, defined by $C^T(u,v)=C(v,u)$ is also a copula. A copula $B$ is said to be \emph{invertible} if there exists a copula $C$ such that $B \ast C = C \ast B = M$. The set of invertible copulas is denoted by $\Inv\mathfrak{C}$.

\begin{rem}
If they exist, left and right inverses of a copula $C\in \mathfrak{C}$ are unique and given by the transposed copula $C^T$ (for a proof, see \cite[Theorem 7.1]{Da}). 
\end{rem}

An important class of invertible copulas is the class of shuffles of $M$. This class attracts our interest because it is easy to compute. Moreover, the class of shuffles of $M$ is dense in $\mathfrak{C}$ with respect to the uniform norm. A definition of a shuffle of $M$ is given below.

\begin{defn} A copula $C$ is a \emph{shuffle of $M$} if and only if there exist a positive integer $n$, partitions $0 = s_0 < s_1 < \dots < s_n =1$ and $0 = t_0 < t_1 < \dots < t_n =1$ of $[0,1]$, and a permutation $\sigma$ on the set $\{1,2,\dots,n\}$ such that each $(s_{i-1},s_i)\times(t_{\sigma(i)-1},t_{\sigma(i)})$ is a square of $C$-volume $s_i-s_{i-1}$ and its intersection with the support of $C$ is one of the diagonals of the square.
\end{defn}

\begin{figure}[ht]
\psset{xunit=3cm,yunit=3cm}
\begin{center}
\begin{pspicture*}(-0.2,-0.2)(1.2,1.2)
\psline[linecolor=black](0,1)(1,1)
\psline[linecolor=black](1,0)(1,1)
\psline[linecolor=black](0,0)(0,1)
\psline[linecolor=black](0,0)(1,0)
\psline[linecolor=black,linestyle=dotted](0.2,0)(0.2,1)
\psline[linecolor=black,linestyle=dotted](0.7,0)(0.7,1)
\psline[linecolor=black,linestyle=dotted](0,0.5)(1,0.5)
\psline[linecolor=black,linestyle=dotted](0,0.8)(1,0.8)
  \psplot[linecolor=blue,plotpoints=400]{0}{0.2}{x 0.8 add}
  \psplot[linecolor=blue,plotpoints=400]{0.2}{0.7}{0.7 x sub}
  \psplot[linecolor=blue,plotpoints=400]{0.7}{1}{x 0.7 sub 0.5 add}
\uput{0.2}[270](0,0){$s_0$}
\uput{0.2}[270](1,0){$s_3$}
\uput{0.2}[270](0.2,0){$s_1$}
\uput{0.2}[270](0.7,0){$s_2$}
\uput{0.2}[180](0,0){$t_0$}
\uput{0.2}[180](0,0.5){$t_1$}
\uput{0.2}[180](0,0.8){$t_2$}
\uput{0.2}[180](0,1){$t_3$}

\end{pspicture*}
\end{center}
  \caption[ ]{the support of a shuffle of $M$ where $\sigma=(1~3~2)$} 
  \label{fig:shuffle}
\end{figure}
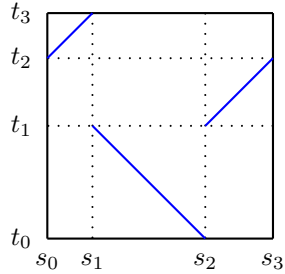 

An example of shuffles of $M$ is given below. For more details on shuffles of $M$, see, e.g., \cite{Du,Sa}.

\begin{ex} The \emph{straight shuffle of $M$} at $\alpha \in [0,1]$, denoted by $S_{\alpha}$, is defined to be the shuffle of $M$ supported on the straight line joining the points $(0,\alpha)$ and $(1-\alpha,1)$ and the straight line joining the points $(1-\alpha,0)$ and $(1,\alpha)$. Moreover, from straightforward computation, we have
\begin{displaymath}(S_{\alpha}*C)(u,v)=
\left\{
	\begin{array}{ll}
		C(u+1-\alpha,v)-C(1-\alpha,v) & \text{if}~~ 0 \le u \le \alpha \le 1\\
		v -C(1-\alpha,v)+C(u-\alpha,v) & \text{if}~~ 0 \le \alpha \le u \le 1.
	\end{array}
\right.\end{displaymath}

\label{straight}
\end{ex}

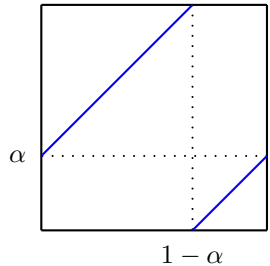
\begin{figure}[ht]
\psset{xunit=3cm,yunit=3cm}
\begin{center}
\begin{pspicture*}(-0.2,-0.2)(1.2,1.2)
\psline[linecolor=black](0,1)(1,1)
\psline[linecolor=black](1,0)(1,1)
\psline[linecolor=black](0,0)(0,1)
\psline[linecolor=black](0,0)(1,0)
\psline[linecolor=black,linestyle=dotted](0.67,0)(0.67,1)
\psline[linecolor=black,linestyle=dotted](0,0.33)(1,0.33)
  \psplot[linecolor=blue,plotpoints=400]{0}{0.67}{x 0.33 add}
  \psplot[linecolor=blue,plotpoints=400]{0.67}{1}{x 0.67 sub}
\uput{0.2}[270](0.67,0){$1-\alpha$}
\uput{0.2}[180](0,0.33){$\alpha$}
\end{pspicture*}
\end{center}
  \caption[ ]{the support of the straight shuffle of $M$ at $\alpha \in [0,1]$} 
  \label{fig:straight}
\end{figure} 

Now, let us recall a generalization of the $*$-product. The motivation behind this generalization comes from a research on compatibility of copulas. Copulas $C_{12},C_{13}$ and $C_{23}$ are said to be \emph{compatible} if there exists a 3-copula $\tilde{C}$ such that
\begin{align*}
C_{12}(u,v) &= \tilde{C}(u,v,1),\\
C_{13}(u,w) &= \tilde{C}(u,1,w),\\
C_{23}(v,w) &= \tilde{C}(1,v,w).
\end{align*}

Given copulas $A$ and $B$, the class $\mathcal{C}(A,B)$ is the set of all copulas that are compatible with copulas $A$ and $B$. In order to characterize these classes, Durante et al. \cite{Fre} defined the $*_{\mathbf{C}}$ product introduced in Section \ref{intro}.

\begin{prop}[\cite{Fre}, Theorem 4.1] Let $A,B \in \mathfrak{C}$. A copula $U$ is in the class $\mathcal{C}(A,B)$ if and only if there exists a family of copulas $\mathbf{C}=\{C_t\}_{t\in[0,1]}$ such that $U=A*_{\mathbf{C}}B$.
\end{prop}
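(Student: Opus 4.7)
The plan is to prove both implications separately via a probabilistic disintegration argument rooted in Sklar's theorem.

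For the forward implication, suppose $U\in\mathcal{C}(A,B)$ is witnessed by a $3$-copula $\tilde{C}$. I would realize $\tilde{C}$ as the joint distribution of a triple $(X,Y,Z)$ of uniform $[0,1]$ random variables on some probability space, so that $(X,Y)\sim A$, $(Y,Z)\sim B$ and $(X,Z)\sim U$. Differentiating the bivariate marginals gives, for almost every $t$,
\begin{displaymath}
P(X\le u\mid Y=t)=\partial_2 A(u,t),\qquad P(Z\le v\mid Y=t)=\partial_1 B(t,v).
\end{displaymath}
Applying Sklar's theorem to the regular conditional joint law of $(X,Z)$ given $Y=t$ produces, for almost every $t$, a copula $C_t$ satisfying
\begin{displaymath}
P(X\le u,Z\le v\mid Y=t)=C_t\bigl(\partial_2 A(u,t),\partial_1 B(t,v)\bigr).
\end{displaymath}
Integrating the tower identity against the Lebesgue law of $Y$ then yields $U(u,v)=(A*_{\mathbf{C}}B)(u,v)$.

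For the backward implication, given any family $\mathbf{C}=\{C_t\}_{t\in[0,1]}$ for which the integrand makes sense, I would form the candidate $3$-copula
\begin{displaymath}
\tilde{C}(u,v,w)=\int_0^v C_t\bigl(\partial_2 A(u,t),\partial_1 B(t,w)\bigr)\,dt
\end{displaymath}
and verify the three required marginal identities directly: using $C_t(a,1)=a$, the choice $w=1$ collapses the integrand to $\partial_2 A(u,t)$ and gives $\tilde{C}(u,v,1)=A(u,v)$; symmetrically $\tilde{C}(1,v,w)=B(v,w)$; and the choice $v=1$ reproduces $(A*_{\mathbf{C}}B)(u,w)=U(u,w)$. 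Groundedness $\tilde{C}(0,v,w)=\tilde{C}(u,0,w)=\tilde{C}(u,v,0)=0$ follows from $C_t(0,\cdot)=C_t(\cdot,0)=0$, and the $3$-increasing property reduces to the $2$-increasing property of each $C_t$ applied pointwise to any box $[u_1,u_2]\times[v_1,v_2]\times[w_1,w_2]$, combined with the fact that $t\mapsto\partial_2 A(u,t)$ and $t\mapsto\partial_1 B(t,w)$ are nondecreasing.

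The principal obstacle is precisely the measurability concern that motivates this paper. In the forward direction one needs a version $t\mapsto C_t$ that is jointly measurable so that the integrand is well-defined; I would handle this by invoking a disintegration theorem for regular conditional probabilities on the Polish space $[0,1]^2$, from which the $C_t$ inherit joint measurability. In the backward direction the statement is meaningful only when the integrand is Lebesgue measurable, which implicitly restricts $\mathbf{C}$ to an admissible subclass; making this restriction precise is exactly the gap that Sections \ref{measurable} and \ref{products} are designed to fill.
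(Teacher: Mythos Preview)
The paper does not prove this proposition; it is quoted as Theorem~4.1 of \cite{Fre} without argument, so there is no in-paper proof to compare against. Your disintegration-plus-Sklar outline is the standard route and is essentially correct in spirit.

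There is, however, a concrete slip in the backward direction. You justify the $3$-increasing property of $\tilde C$ by claiming that ``$t\mapsto\partial_2 A(u,t)$ and $t\mapsto\partial_1 B(t,w)$ are nondecreasing.'' They are not: neither partial derivative is monotone in $t$ in general. What is true---and what the argument actually needs---is that $u\mapsto\partial_2 A(u,t)$ and $w\mapsto\partial_1 B(t,w)$ are nondecreasing for almost every fixed $t$, which follows from the $2$-increasingness of $A$ and $B$. With that correction the $3$-volume of $[u_1,u_2]\times[v_1,v_2]\times[w_1,w_2]$ under $\tilde C$ becomes
\begin{displaymath}
\int_{v_1}^{v_2}\mu_{C_t}\bigl([\partial_2 A(u_1,t),\partial_2 A(u_2,t)]\times[\partial_1 B(t,w_1),\partial_1 B(t,w_2)]\bigr)\,dt\ge 0,
\end{displaymath}
and the verification goes through. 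A secondary point in the forward direction: Sklar's theorem determines $C_t$ uniquely only when the conditional marginals $u\mapsto\partial_2 A(u,t)$ and $v\mapsto\partial_1 B(t,v)$ are continuous, which can fail here; obtaining a jointly measurable family therefore requires a measurable selection (for instance the bilinear extension), not merely pointwise existence. Your appeal to a disintegration theorem on a Polish space is the right instinct, but this detail should be made explicit.
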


\begin{ex}[\cite{Rem}, p. 237] For every $B \in \mathfrak{C}$ and every family of copulas $\mathbf{C}=\{C_t\}_{t \in [0,1]}$ such that $*_{\mathbf{C}}$ is well-defined, we have
\begin{align*}
B*_{\mathbf{C}}M &= B = M*_{\mathbf{C}}B,\\
(B*_{\mathbf{C}}W)(u,v) &= u - B(u,1-v),\\
(W*_{\mathbf{C}}B)(u,v) &= v- B(1-u,v).
\end{align*}
\end{ex}

%%%%%%%%%%%%%%%%%%%%%%%%%%%%%%%%%%%%%%%%%%
\section{Measurability of the Integrand}\label{measurable}

In this section, we introduce various sets of conditions on the family of copulas so that the $*_{\mathbf{C}}$ product is well-defined. But first, let us give an example where the $*_{\mathbf{C}}$ product is not well-defined.

\begin{ex}

Let $P$ be a Lebesgue nonmeasurable subset of $[0,1].$ Consider the family of copulas $\mathbf{C}=\{C_t\}_{t \in [0,1]}$ where 
\begin{displaymath}C_t=
\left\{
	\begin{array}{ll}
		M & \text{if}~~ t \in P\\
		W & \text{if}~~ t \notin P.
	\end{array}
\right.\end{displaymath}

Then we can see that $C_t(\partial_2A(x,t),\partial_1B(t,y))$ is not Lebesgue measurable in the variable $t$ for some $A,B \in \mathfrak{C}$ and $x,y \in [0,1]$, e.g., $A,B=\Pi$ and any $x,y \in (0,1)$.
\end{ex}

From the above example, it is evident that the $*_{\mathbf{C}}$ product is not always well-defined since the integrand may not be a Lebesgue measurable function. We now give two sets of conditions such that $C_t(\partial_2A(x,t),\partial_1B(t,y))$ is a Lebesgue measurable function in the variable $t$.

\begin{thm} Let $\mathbf{C}=\{C_t\}_{t \in [0,1]}$ be a family of copulas which satisfies the following conditions:

\begin{enumerate}
\item $\mathbf{C}$ consists of countably many distinct copulas and
\item for each $A \in \mathfrak{C}$, the set $\{t \in [0,1] \colon C_t = A\}$ is Borel measurable.
\end{enumerate}
Then, for all $x,y \in [0,1]$ and for all $A,B \in \mathfrak{C}$, $C_t(\partial_2A(x,t),\partial_1B(t,y))$ is Lebesgue measurable in the variable $t$.
\label{mc}
\end{thm}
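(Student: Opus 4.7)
The plan is to exploit the countability of the range of $t \mapsto C_t$ to decompose the integrand as a countable sum of measurable pieces. Since copulas are continuous, composing a fixed copula with measurable coordinate functions will preserve measurability, and the hypothesis on Borel measurability of the level sets lets us glue these pieces together.

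Here is how I would proceed. First, enumerate the distinct copulas appearing in $\mathbf{C}$ as $D_1, D_2, \dots$, and set $E_n = \{t \in [0,1] : C_t = D_n\}$. By assumption each $E_n$ is Borel, and $\{E_n\}$ partitions $[0,1]$. Second, fix $x, y \in [0,1]$ and observe that the maps $t \mapsto A(x,t)$ and $t \mapsto B(t,y)$ are Lipschitz (indeed nondecreasing Lipschitz-$1$), hence differentiable for almost every $t \in [0,1]$. Setting the partials to $0$ on the exceptional null set, the functions
\begin{displaymath}
\varphi(t) := \partial_2 A(x,t), \qquad \psi(t) := \partial_1 B(t,y)
\end{displaymath}
are Lebesgue measurable (as pointwise a.e.\ limits of difference quotients of measurable functions) and take values in $[0,1]$.

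Third, since every copula $D_n$ is continuous on $[0,1]^2$ (even Lipschitz), the composition $t \mapsto D_n(\varphi(t), \psi(t))$ is Lebesgue measurable for each $n$. Finally, on each $E_n$ the integrand coincides with $D_n(\varphi(t), \psi(t))$, so we may write
\begin{displaymath}
C_t\bigl(\partial_2 A(x,t), \partial_1 B(t,y)\bigr) = \sum_{n=1}^{\infty} \mathbf{1}_{E_n}(t)\, D_n\bigl(\varphi(t), \psi(t)\bigr).
\end{displaymath}
Each summand is a product of a Borel indicator $\mathbf{1}_{E_n}$ with a Lebesgue measurable function, and a countable sum of Lebesgue measurable functions is Lebesgue measurable, completing the proof.

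The only delicate point is the a.e.\ existence and measurability of $\varphi$ and $\psi$; but this is standard from the Lipschitz property of copulas (each section $t \mapsto A(x,t)$ is monotone Lipschitz, so Lebesgue's theorem guarantees differentiability a.e., and the usual limsup-of-difference-quotients representation gives a Borel measurable version). Everything else reduces to the routine facts that continuous functions are Borel measurable, composition of Borel with Lebesgue measurable is Lebesgue measurable, and countable sums of measurable functions are measurable.
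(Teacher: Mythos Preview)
Your proof is correct and follows essentially the same approach as the paper: enumerate the distinct copulas, partition $[0,1]$ into the Borel level sets, and express the integrand as a countable sum of indicator-times-measurable terms. Your version is in fact more explicit than the paper's, which simply asserts the measurability of each summand without spelling out the Lipschitz/differentiability argument for $\varphi$ and $\psi$ or the continuity-plus-composition step for $D_n$.
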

\begin{proof}Let $\mathbf{C} = \{C_t\}_{t \in [0,1]}$ be a family of copulas satisfying the two conditions. Since there are countably many distinct copulas. Let $E = \{C_1,C_2,\dots\}$ be an enumeration of the distinct copulas in the family.

For each $C_n \in E$, let $T_n=\{t \in [0,1] \colon C_t = C_n\}$. Then we can write $C_t(\partial_2A(x,t),\partial_1B(t,y))$ as \begin{displaymath} \sum_{n=1}^{\infty} \chi_{T_n}(t)C_n(\partial_2A(x,t),\partial_1B(t,y)), \end{displaymath}

\noindent which is a countable sum of Lebesgue measurable functions. Hence, it is Lebesgue measurable.
\end{proof}

Observe that the proof of the above theorem works perfectly fine if we replace Borel measurability by Lebesgue measurability.

\begin{thm} If the map $(t,x,y) \mapsto C_t(x,y)$ is Borel measurable, then for all $x,y \in [0,1]$ and for all $A,B \in \mathfrak{C}$, $C_t(\partial_2A(x,t),\partial_1B(t,y))$ is Lebesgue measurable in the variable $t$.
\label{mu}
\end{thm}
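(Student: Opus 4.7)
The plan is to write the integrand as a composition
\[
t \;\longmapsto\; T(t) = \bigl(t,\; \partial_2 A(x,t),\; \partial_1 B(t,y)\bigr) \;\longmapsto\; F(t,u,v) = C_t(u,v),
\]
so that $t \mapsto C_t(\partial_2 A(x,t),\partial_1 B(t,y)) = F \circ T$. The outer map $F$ is Borel measurable by hypothesis, so the bulk of the work is to verify that the inner map $T:[0,1]\to[0,1]^3$ is Lebesgue measurable and then invoke the standard fact that the composition of a Borel measurable function with a Lebesgue measurable one is Lebesgue measurable.

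First I would handle the inner map. Since every copula is Lipschitz with constant $1$, the section $t\mapsto A(x,t)$ is absolutely continuous for each fixed $x$, hence differentiable almost everywhere with a Lebesgue measurable derivative (the a.e.\ limit of the measurable difference quotients). I would set $\partial_2 A(x,t):=0$ on the null set where the derivative fails to exist; the resulting function $t\mapsto \partial_2 A(x,t)$ is then Lebesgue measurable. By an identical argument, $t\mapsto \partial_1 B(t,y)$ is Lebesgue measurable. Together with the identity coordinate, this shows that $T$ is coordinatewise Lebesgue measurable, and since the Borel $\sigma$-algebra on $[0,1]^3$ equals the product $\sigma$-algebra, $T^{-1}(E)$ is Lebesgue measurable for every Borel set $E\subseteq [0,1]^3$.

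Finally, given any Borel set $E'\subseteq \mathbb{R}$, the Borel measurability of $F$ gives $F^{-1}(E')\in \mathcal{B}([0,1]^3)$, so
\[
(F\circ T)^{-1}(E') = T^{-1}\bigl(F^{-1}(E')\bigr) \in \mathcal{L}([0,1]).
\]
Thus $F\circ T$ is Lebesgue measurable, which is exactly the claim.

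The one genuinely delicate point is the asymmetry in the composition principle: a Lebesgue measurable function composed with a Borel measurable one need not be Lebesgue measurable, so it is crucial that we use Borel (not merely Lebesgue) measurability of $(t,x,y)\mapsto C_t(x,y)$ as hypothesized. That is why coordinatewise Lebesgue measurability of $T$, together with the Borel hypothesis on $F$, is precisely the right combination, and it is the main obstacle the statement is designed to sidestep.
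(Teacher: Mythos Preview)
Your proof is correct and follows the same approach as the paper: write the integrand as the composition of the Lebesgue measurable map $t\mapsto (t,\partial_2 A(x,t),\partial_1 B(t,y))$ with the Borel measurable map $(t,u,v)\mapsto C_t(u,v)$, and invoke the composition principle. You supply more detail than the paper (justifying measurability of the partial derivatives and noting the asymmetry in the composition rule), but the argument is the same.
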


\begin{proof} For all $x,y \in [0,1]$ and $A,B \in \mathfrak{C}$, the map $t \mapsto (t,\partial_2A(x,t),\partial_1B(t,y))$ is Lebesgue measurable since each component function is Lebesgue measurable in the variable $t$. Then, being the composition of a Lebesgue measurable map $t \mapsto (t,\partial_2A(x,t),\partial_1B(t,y))$ and a Borel measurable map $(t,x,y) \mapsto C_t(x,y)$, the map $t \mapsto C_t(\partial_2A(x,t),\partial_1B(t,y))$ is Lebesgue measurable.
\end{proof}

Denoted by $\mathcal{M}_c$ the collection of families which satisfy the set of conditions in Theorem \ref{mc}, $\mathcal{M}_u$ the collection of families which satisfy the condition in Theorem \ref{mu} and $\mathcal{M}$ the collection of families such that, for all $A,B \in \mathfrak{C}$ and $x,y \in [0,1]$, the function $C_t(\partial_2A(x,t),\partial_1B(t,y))$ is Lebesgue measurable in the variable $t$. We have just shown that $\mathcal{M}_c$ and $\mathcal{M}_u$ are subcollections of $\mathcal{M}$. In practice, it is not easy to determine whether a family $\mathbf{C}$ is an element of $\mathcal{M}$. That is why we introduce the collections $\mathcal{M}_c$ and $\mathcal{M}_u$.

\begin{lemma} If a family of copulas satifies the set of conditions in Theorem \ref{mc}, then it also satisfies the condition in Theorem \ref{mu}. That is, $\mathcal{M}_c \subseteq \mathcal{M}_u$. 
\end{lemma}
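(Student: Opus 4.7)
The plan is to reuse the decomposition that appears in the proof of Theorem \ref{mc}. Let $\mathbf{C} = \{C_t\}_{t \in [0,1]} \in \mathcal{M}_c$ and enumerate its countably many distinct copulas as $C_1, C_2, \ldots$. Set $T_n = \{t \in [0,1] : C_t = C_n\}$; by hypothesis each $T_n$ is a Borel subset of $[0,1]$, and the $T_n$'s partition $[0,1]$. I would then observe that for every $(t,x,y) \in [0,1]^3$,
\begin{displaymath}
C_t(x,y) = \sum_{n=1}^{\infty} \chi_{T_n}(t)\, C_n(x,y).
\end{displaymath}

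Next I would verify that each summand is Borel measurable as a function on the product space $[0,1]^3$. The factor $(t,x,y) \mapsto \chi_{T_n}(t)$ is the indicator of the Borel set $T_n \times [0,1]^2$, hence Borel measurable. The factor $(t,x,y) \mapsto C_n(x,y)$ depends only on $(x,y)$, and since $C_n$ is continuous on $[0,1]^2$ it is Borel measurable; composing with the projection to the last two coordinates preserves Borel measurability. The pointwise product of two Borel measurable functions is Borel measurable, so each term of the series is Borel measurable on $[0,1]^3$.

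Finally, a countable sum of Borel measurable functions is Borel measurable (the partial sums are Borel and the sum coincides with their pointwise limit, since for each fixed $t$ exactly one $\chi_{T_n}(t)$ is nonzero). Therefore $(t,x,y) \mapsto C_t(x,y)$ is Borel measurable, which is exactly the condition defining $\mathcal{M}_u$. Hence $\mathbf{C} \in \mathcal{M}_u$, proving $\mathcal{M}_c \subseteq \mathcal{M}_u$.

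There is no real obstacle here; the only subtlety worth stating carefully is the lifting of the one-variable functions $\chi_{T_n}$ and of the two-variable continuous functions $C_n$ to Borel measurable functions on the three-dimensional product, which is handled by recognizing them as indicators of, respectively, $T_n \times [0,1]^2$ and compositions with coordinate projections.
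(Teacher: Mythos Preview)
Your proof is correct and follows essentially the same approach as the paper: both use the decomposition $C_t(x,y)=\sum_{n}\chi_{T_n}(t)\,C_n(x,y)$. The only cosmetic difference is that you conclude Borel measurability via closure of Borel functions under products and countable sums, whereas the paper computes the preimage of $[0,a]$ explicitly as $\bigcup_{n}T_n\times C_n^{-1}([0,a])$.
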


\begin{proof} Let $\mathbf{C}=\{C_t\}_{t \in [0,1]} \in \mathcal{M}_c$ be a family of copulas. For each $C_n \in \mathbf{C}$, let $T_n=\{t \in [0,1] \colon C_t = C_n\}$. Then we can write \begin{center}$\displaystyle C_t(x,y)= \sum_{n=1}^{\infty} \chi_{T_n}(t)C_n(x,y).$\end{center} For any $a \in [0,1]$, the inverse image of the interval $[0,a]$ under the map $(t,x,y) \mapsto C_t(x,y)$ is equal to $\displaystyle\bigcup_{n=1}^{\infty}T_n \times C_n^{-1}([0,a])$. Observe that $T_n$ and $C_n^{-1}([0,a])$ are Borel measurable. Hence, the inverse image of the interval $[0,a]$ under the map $(t,x,y) \mapsto C_t(x,y)$ is Borel measurable.
\end{proof}

Though $\mathcal{M}_u$ is quite a large subcollection of $\mathcal{M}$, we are still not able to fully characterize all elements of $\mathcal{M}$. The following proposition helps us in dealing with families which behave well outside a set of Lebesgue measure zero.

\begin{prop} Let $\mathbf{C} \in \mathcal{M}$. If $\mathbf{D}$ is another family of copulas such that $D_t=C_t$ a.e. $t \in [0,1]$, then $\mathbf{D} \in \mathcal{M}$ and the products $*_{\mathbf{C}}$ and $*_{\mathbf{D}}$ are identical. We say that the family $\mathbf{D}$ is $*$-equivalent to the family $\mathbf{C}$.
\label{ae}
\end{prop}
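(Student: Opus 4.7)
The plan is to exploit two standard facts about Lebesgue measure: it is complete, so every function that equals a Lebesgue measurable function almost everywhere is itself Lebesgue measurable; and two Lebesgue measurable functions that agree almost everywhere have equal Lebesgue integrals. Both conclusions of the proposition follow at once from these.

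Concretely, I would begin by letting $N = \{t \in [0,1] : D_t \ne C_t\}$, so $\lambda(N) = 0$ by hypothesis. Fix arbitrary $A,B \in \mathfrak{C}$ and $u,v \in [0,1]$, and set $f(t) = C_t(\partial_2 A(u,t), \partial_1 B(t,v))$ and $g(t) = D_t(\partial_2 A(u,t), \partial_1 B(t,v))$. For every $t \notin N$ the copulas $C_t$ and $D_t$ coincide as functions on $[0,1]^2$, so $f(t) = g(t)$; hence $f = g$ almost everywhere on $[0,1]$.

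Since $\mathbf{C} \in \mathcal{M}$, $f$ is Lebesgue measurable in $t$. Completeness of Lebesgue measure then applies: for each $a \in \mathbb{R}$, the set $\{g > a\} \triangle \{f > a\}$ is contained in $N$, hence is Lebesgue measurable, and therefore $\{g > a\}$ is Lebesgue measurable as well. This shows $g$ is Lebesgue measurable, and since $A,B,u,v$ were arbitrary, $\mathbf{D} \in \mathcal{M}$.

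For the last assertion, $f$ and $g$ are both bounded by $1$ and equal almost everywhere, so $\int_0^1 f(t)\,dt = \int_0^1 g(t)\,dt$, i.e., $(A*_{\mathbf{C}}B)(u,v) = (A*_{\mathbf{D}}B)(u,v)$; since $A,B,u,v$ are arbitrary, the two products coincide. I do not anticipate any real obstacle; the only point worth flagging is that Lebesgue (rather than Borel) measurability is essential, which is precisely why the collection $\mathcal{M}$ was defined using Lebesgue measurability in the first place.
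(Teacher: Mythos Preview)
Your proof is correct and is exactly the approach the paper takes: the paper's proof is a one-line appeal to the fact that if $f=g$ a.e.\ and $f$ is Lebesgue measurable then $g$ is Lebesgue measurable, and you have simply spelled out the details of that argument together with the (implicit) equality of the integrals.
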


\begin{proof} The result readily follows from the fact that if $f=g$ a.e. and $f$ is Lebesgue measurable, then $g$ is also Lebesgue measurable.
\end{proof}

%%%%%%%%%%%%%%%%%%%%%%%%%%%%%%%%%%%%%%%%%%%%%%%

\section{The $*_{\mathbf{C}}$ Product}\label{products}

In this section, we properly re-define the $*_{\mathbf{C}}$ product. Then we derive some of its properties.

\begin{defn} Let $\mathbf{C} \in \mathcal{M}$. The $*_{\mathbf{C}}$ product of copulas $A$ and $B$ is defined by \begin{displaymath} (A*_{\mathbf{C}}B)(x,y) = \int_0^1{C_t(\partial_2A(x,t),\partial_1B(t,y))}~dt. \end{displaymath}
\end{defn}

\begin{rem}[\cite{Fre}, Proposition 3.1] For all $\mathbf{C} \in \mathcal{M}$ and for all $A,B \in \mathfrak{C}$, we have $A*_{\mathbf{C}}B \in \mathfrak{C}$.
\end{rem}

\begin{lemma} Let $\mathbf{C} \in \mathcal{M}$ and $A,B \in \mathfrak{C}$. If $A$ is right invertible or $B$ is left invertible with respect to the $*$-product, then $A*_{\mathbf{C}}B=A*B$.
\label{smallest-class}
\end{lemma}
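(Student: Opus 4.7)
My plan is to exploit a useful degeneracy: for any copula $C$, the boundary conditions force $C(a,b) = ab$ whenever $a \in \{0,1\}$ or $b \in \{0,1\}$, since $C(0,b) = 0 = 0\cdot b$, $C(1,b) = b$, $C(a,0) = 0$, and $C(a,1) = a$. Consequently, if for almost every $t \in [0,1]$ at least one of the values $\partial_2 A(x,t)$ and $\partial_1 B(t,y)$ lies in $\{0,1\}$, then $C_t(\partial_2 A(x,t), \partial_1 B(t,y))$ coincides with $\partial_2 A(x,t)\,\partial_1 B(t,y)$ almost everywhere in $t$, and the $*_{\mathbf{C}}$-integral collapses to the $*$-integral. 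The core task is therefore to derive such a $\{0,1\}$-valued constraint from invertibility.

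Assume first that $A$ is right invertible. By the remark following the definition of $\Inv\mathfrak{C}$, the right inverse of $A$ must be $A^T$, so $A * A^T = M$. Writing this out and using $\partial_1 A^T(t, v) = \partial_2 A(v, t)$, I get
\begin{displaymath}
\int_0^1 \partial_2 A(u,t)\,\partial_2 A(v,t)\,dt = \min(u,v)
\end{displaymath}
for all $u,v \in [0,1]$. Setting $u = v = x$ yields $\int_0^1 (\partial_2 A(x,t))^2\,dt = x$, while the marginal condition $A(x,1) - A(x,0) = x$ gives $\int_0^1 \partial_2 A(x,t)\,dt = x$. Subtracting, $\int_0^1 \partial_2 A(x,t)\bigl(1 - \partial_2 A(x,t)\bigr)\,dt = 0$. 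Since $0 \le \partial_2 A \le 1$ wherever it exists, the integrand is nonnegative and must vanish for a.e. $t$, forcing $\partial_2 A(x,t) \in \{0,1\}$ for almost every $t$, for each fixed $x$.

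A symmetric argument, starting from $B^T * B = M$, shows that left invertibility of $B$ implies $\partial_1 B(t,y) \in \{0,1\}$ for a.e. $t$, for each fixed $y$. In either case, the opening observation yields $C_t(\partial_2 A(x,t), \partial_1 B(t,y)) = \partial_2 A(x,t)\,\partial_1 B(t,y)$ for a.e. $t$, so $(A *_{\mathbf{C}} B)(x,y) = (A * B)(x,y)$ for every $x,y \in [0,1]$.

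The main point to watch is that I need the $\{0,1\}$-valued conclusion slicewise (for each fixed $x$ on a full-measure set of $t$), not merely jointly almost everywhere, since the integral in the definition of $*_{\mathbf{C}}$ is taken in $t$ alone. Fortunately, the integral identity above delivers exactly this slicewise statement, so no Fubini manipulation is needed and the proof reduces to the elementary computation outlined.
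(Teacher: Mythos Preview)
Your proof is correct and follows essentially the same strategy as the paper: both arguments reduce to the fact that right invertibility of $A$ forces $\partial_2 A(x,t)\in\{0,1\}$ for a.e.\ $t$, and then exploit the boundary conditions of copulas to collapse $C_t(\partial_2 A,\partial_1 B)$ to the product $\partial_2 A\cdot\partial_1 B$. The only difference is that the paper simply asserts the $\{0,1\}$-valued property, whereas you supply a clean derivation of it from $A*A^T=M$; this makes your version more self-contained but otherwise identical in spirit.
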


\begin{proof} It suffices to prove only the statement for $A$ as the other can be proved analogously. Let $A$ be a right invertible copula. Then, we have $\partial_2{A}(u,v)\in \{0,1\}$ almost everywhere. Let $Z$ be the set $\{(u,v)\in [0,1]^2 \colon \partial_2{A}(u,v) =1\}$. Compute
\begin{align*}
(A*_{\mathbf{C}}B)(x,y) &= \int_0^1{C_t(\partial_2A(x,t),\partial_1B(t,y))}~dt\\
&=\int_Z{C_t(1,\partial_1B(t,y))}~dt\\
&=\int_Z{\partial_1B(t,y)}~dt\\
&=\int_0^1{\partial_2A(x,t)\partial_1B(t,y)}~dt\\
&=(A*B)(x,y).
\end{align*}
\end{proof}

\begin{lemma} The collection $\{\Pi*_{\mathbf{C}}\Pi \colon \mathbf{C} \in \mathcal{M}\}=\mathfrak{C}$.
\label{pipi}
\end{lemma}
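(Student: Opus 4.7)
The plan is to show both inclusions. The inclusion $\{\Pi*_{\mathbf{C}}\Pi : \mathbf{C}\in\mathcal{M}\}\subseteq \mathfrak{C}$ is immediate from the remark stating that $A*_{\mathbf{C}}B\in\mathfrak{C}$ whenever $\mathbf{C}\in\mathcal{M}$ and $A,B\in\mathfrak{C}$, so the entire content lies in the reverse inclusion $\mathfrak{C}\subseteq\{\Pi*_{\mathbf{C}}\Pi : \mathbf{C}\in\mathcal{M}\}$.

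For the nontrivial direction, I would first simplify the defining integral in the special case $A=B=\Pi$. Since $\partial_2\Pi(x,t)=x$ and $\partial_1\Pi(t,y)=y$, the integrand collapses to $C_t(x,y)$, giving
\begin{displaymath}
(\Pi*_{\mathbf{C}}\Pi)(x,y) \;=\; \int_0^1 C_t(x,y)\,dt
\end{displaymath}
for any family $\mathbf{C}\in\mathcal{M}$. This shows that realising a prescribed target copula $D$ as $\Pi*_{\mathbf{C}}\Pi$ is equivalent to averaging a family of copulas to $D$.

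Given $D\in\mathfrak{C}$, I would simply take the constant family $\mathbf{C}=\{C_t\}_{t\in[0,1]}$ with $C_t=D$ for every $t\in[0,1]$. This family consists of only one distinct copula, and for each $A\in\mathfrak{C}$ the set $\{t\in[0,1]:C_t=A\}$ is either $\emptyset$ or $[0,1]$, hence Borel measurable; therefore $\mathbf{C}\in\mathcal{M}_c\subseteq\mathcal{M}$. Plugging into the simplified formula yields $(\Pi*_{\mathbf{C}}\Pi)(x,y)=\int_0^1 D(x,y)\,dt=D(x,y)$, so $D$ lies in the displayed collection.

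There is essentially no obstacle here: the key observation is just that the partial derivatives of $\Pi$ are the coordinate projections, so $\Pi*_{\mathbf{C}}\Pi$ is literally the average of $\mathbf{C}$, and the constant family $\mathbf{C}\equiv D$ trivially witnesses membership of $D$ in $\mathcal{M}_c$. The lemma therefore highlights how unrestrictive the definition of $*_{\mathbf{C}}$ is: even fixing $A=B=\Pi$, every copula appears as a $*_{\mathbf{C}}$-product.
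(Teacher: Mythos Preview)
Your proof is correct and follows essentially the same approach as the paper: take the constant family $C_t\equiv D$, observe it lies in $\mathcal{M}$, and compute $(\Pi*_{\mathbf{C}}\Pi)(x,y)=\int_0^1 D(x,y)\,dt=D(x,y)$. Your version is slightly more explicit in verifying $\mathbf{C}\in\mathcal{M}_c$ and in spelling out the partial derivatives of $\Pi$, but the argument is the same.
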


\begin{proof} For any copula $C \in \mathfrak{C}$, consider the family $\mathbf{C}$ consisting of $C_t=C$ for all $t \in [0,1]$. Then we have that $\mathbf{C} \in \mathcal{M}$ and 
\begin{displaymath}
(\Pi*_{\mathbf{C}}\Pi)(x,y)=\int_0^1{C(x,y)}~dt =C(x,y).
\end{displaymath}
This completes the proof.
\end{proof}

\begin{thm} If $\mathbf{C}_n, \mathbf{C} \in \mathcal{M}$ such that $C_{n,t}(x,y) \rightarrow C_t(x,y)$ pointwise for all $t \in [0,1]$, then $(A*_{\mathbf{C}_n}B)(x,y) \rightarrow (A*_{\mathbf{C}}B)(x,y)$ pointwise.
\end{thm}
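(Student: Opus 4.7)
\medskip

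The plan is to apply the Dominated Convergence Theorem. Fix $x, y \in [0,1]$ and set
\[
f_n(t) \;=\; C_{n,t}\bigl(\partial_2 A(x,t),\partial_1 B(t,y)\bigr), \qquad
f(t) \;=\; C_t\bigl(\partial_2 A(x,t),\partial_1 B(t,y)\bigr).
\]
Because $\mathbf{C}_n,\mathbf{C}\in\mathcal{M}$, each $f_n$ and $f$ is Lebesgue measurable on $[0,1]$ by the very definition of $\mathcal{M}$, so that the two sides of the alleged limit are honest Lebesgue integrals and the products $A*_{\mathbf{C}_n}B$ and $A*_{\mathbf{C}}B$ are well-defined.

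Next I would establish pointwise a.e.\ convergence $f_n(t)\to f(t)$. The Lipschitz continuity of $A$ in its second argument (for fixed $x$) and of $B$ in its first argument (for fixed $y$) guarantees that $\partial_2 A(x,t)$ and $\partial_1 B(t,y)$ exist for $t$ in the complement of a set $N\subseteq[0,1]$ of Lebesgue measure zero; moreover those partial derivatives lie in $[0,1]$. For any $t\notin N$, the hypothesis that $C_{n,t}(u,v)\to C_t(u,v)$ pointwise on $[0,1]^2$ applied at the specific point $(u,v)=(\partial_2 A(x,t),\partial_1 B(t,y))$ gives $f_n(t)\to f(t)$.

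Finally, since every copula takes values in $[0,1]$, the integrands satisfy $|f_n(t)|\le 1$ for all $n$ and all $t$, so the constant function $g\equiv 1$ is an integrable dominating function on $[0,1]$. Dominated Convergence then yields
\[
(A*_{\mathbf{C}_n}B)(x,y) \;=\; \int_0^1 f_n(t)\,dt \;\longrightarrow\; \int_0^1 f(t)\,dt \;=\; (A*_{\mathbf{C}}B)(x,y),
\]
which is the desired pointwise convergence. There is no serious obstacle here; the only subtlety worth noting is that the measurability of $f_n$ and $f$ does not follow from the pointwise hypothesis alone but is supplied by the assumption $\mathbf{C}_n,\mathbf{C}\in\mathcal{M}$, which is precisely why the hypothesis is stated in this form.
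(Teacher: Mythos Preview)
Your proof is correct and follows essentially the same route as the paper's: apply the Dominated Convergence Theorem with the constant function $1$ as dominating function, after noting that the hypothesis gives pointwise convergence of the integrands. Your write-up is in fact a bit more careful than the paper's, since you explicitly flag the null set where the partial derivatives fail to exist and point out that measurability of the integrands is supplied by the assumption $\mathbf{C}_n,\mathbf{C}\in\mathcal{M}$.
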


\begin{proof} Observe that, for a fixed $t\in [0,1]$, we have \begin{displaymath}C_{n,t}(\partial_2A(x,t),\partial_1B(t,y)) \rightarrow C_t(\partial_2A(x,t),\partial_1B(t,y))\end{displaymath} pointwise.
Moreover, $C_{n,t}(\partial_2A(x,t),\partial_1B(t,y))$ and $C_t(\partial_2A(x,t),\partial_1B(t,y))$ are bounded by $1$ which is Lebesgue integrable on $[0,1]$. By Dominated Convergence Theorem, we have that
\begin{displaymath} \int_0^1 C_{n,t}(\partial_2A(x,t),\partial_1B(t,y))~dt \rightarrow \int_0^1 C_t(\partial_2A(x,t),\partial_1B(t,y))~dt \end{displaymath}
pointwise, which completes the proof.
\end{proof}

\begin{cor} If $\mathbf{C}_n, \mathbf{C} \in \mathcal{M}$ such that $C_{n,t} \rightarrow C_t$ uniformly for all $t \in [0,1]$, then $A*_{\mathbf{C}_n}B \rightarrow A*_{\mathbf{C}}B$ uniformly.
\end{cor}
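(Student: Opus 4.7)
The plan is to mimic the preceding theorem's proof but with a uniform envelope in $(x,y)$. The key observation is that uniform convergence of $C_{n,t}$ to $C_t$ on $[0,1]^2$ for each fixed $t$ gives us a single bound on the integrand that does not depend on $(x,y)$, and then an appeal to the bounded convergence theorem kills the integral uniformly.

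Concretely, for each $n$ I would set
\begin{displaymath}
\varepsilon_n(t) := \sup_{(p,q)\in[0,1]^2} |C_{n,t}(p,q) - C_t(p,q)|,
\end{displaymath}
so that $\varepsilon_n(t)\to 0$ for every $t\in[0,1]$ by hypothesis, and $0\le \varepsilon_n(t)\le 1$. For any $A,B\in\mathfrak{C}$ and any $(x,y)\in[0,1]^2$, the integrands satisfy
\begin{displaymath}
\bigl|C_{n,t}(\partial_2 A(x,t),\partial_1 B(t,y)) - C_t(\partial_2 A(x,t),\partial_1 B(t,y))\bigr| \le \varepsilon_n(t),
\end{displaymath}
so integrating gives $|(A*_{\mathbf{C}_n}B)(x,y)-(A*_{\mathbf{C}}B)(x,y)|\le \int_0^1 \varepsilon_n(t)\,dt$, a bound independent of $(x,y)$. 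Applying the bounded convergence theorem to $\varepsilon_n$ then yields $\int_0^1 \varepsilon_n\to 0$, and uniform convergence of the products follows.

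The only point that needs a small comment is the measurability of $\varepsilon_n$, so that the bounded convergence theorem applies. Since each copula is Lipschitz-continuous, $C_{n,t}-C_t$ is continuous in $(p,q)$, and therefore the supremum defining $\varepsilon_n(t)$ equals the supremum over $\mathbb{Q}^2\cap[0,1]^2$. It thus suffices to know that $t\mapsto C_t(p,q)$ is Lebesgue measurable for each rational $(p,q)$; this is obtained by applying the defining property of $\mathcal{M}$ with $A=B=\Pi$, which gives $\partial_2 A(p,t)=p$ and $\partial_1 B(t,q)=q$, reducing the integrand to $C_t(p,q)$. Hence $\varepsilon_n$ is a countable supremum of measurable functions and so is measurable. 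I do not foresee a real obstacle here, but this measurability check is the one place the proof is not a one-liner; the rest is a direct upgrade of the pointwise proof.
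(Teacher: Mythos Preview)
Your argument is correct. The bound $\varepsilon_n(t)$ dominates the pointwise difference of the integrands uniformly in $(x,y)$, so $\sup_{x,y}|(A*_{\mathbf{C}_n}B)(x,y)-(A*_{\mathbf{C}}B)(x,y)|\le\int_0^1\varepsilon_n(t)\,dt\to 0$ by dominated convergence; and your measurability check via $A=B=\Pi$ and a countable supremum over rationals is exactly the right way to justify that last step within the class $\mathcal{M}$.

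The paper gives no proof of the corollary, but the labeling suggests a different, shorter derivation directly from the preceding theorem. Since copulas are uniformly $1$-Lipschitz on $[0,1]^2$, pointwise convergence of a sequence of copulas is automatically uniform. Hence the hypothesis ``$C_{n,t}\to C_t$ uniformly'' is in fact equivalent to the theorem's pointwise hypothesis, the theorem then yields $(A*_{\mathbf{C}_n}B)(x,y)\to(A*_{\mathbf{C}}B)(x,y)$ pointwise, and because $A*_{\mathbf{C}_n}B$ and $A*_{\mathbf{C}}B$ are themselves copulas (Remark~4.2), this pointwise convergence is again uniform. That route avoids the measurability detour entirely; your route, on the other hand, is self-contained, gives the explicit quantitative bound $\|A*_{\mathbf{C}_n}B-A*_{\mathbf{C}}B\|_\infty\le\int_0^1\varepsilon_n(t)\,dt$, and does not rely on the equicontinuity of $\mathfrak{C}$.
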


\begin{ex} Recall that the set of shuffles of $M$ is dense in $\mathfrak{C}$ with respect to the uniform norm. Hence, given a family $\mathbf{C}=\{C_t\}_{t \in [0,1]}$, we can find families of shuffles of $M$, $\mathbf{S}_n= \{S_{n,t}\}_{t \in [0,1]}$, such that $A*_{\mathbf{S}_n}B \rightarrow A*_{\mathbf{C}}B$ uniformly.
\end{ex}

Our motivation for the previous example is the computation of $A*_{\mathbf{C}}B$. One can see that given a family $\mathbf{C}=\{C_t\}_{t \in [0,1]}$, it is not easy to obtain an explicit formula for $A*_{\mathbf{C}}B$. But with the above result, the computation seems more feasible.

%%%%%%%%%%%%%%%%%%%%%%%%%%%%%%%%%%%%%%%%%%%%%%%%%%%%%%%%%%%%%%%%%%%%%%%%%%%%%%%%%%%%

\section{Identity and Zero of $(\mathfrak{C},*_{\mathbf{C}})$}\label{one-zero}

Recall that the identity and the zero of $(\mathfrak{C},*)$ are $M$ and $\Pi$, respectively. 

\begin{thm} For all $\mathbf{C} \in \mathcal{M}$, the identity of $(\mathfrak{C},*_{\mathbf{C}})$ exists and is unique. Moreover, it is the Fr\'{e}chet-Hoeffding upper bound $M$.
\end{thm}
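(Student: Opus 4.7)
The plan is to invoke the Example cited from [Rem], which already asserts that $B *_{\mathbf{C}} M = B = M *_{\mathbf{C}} B$ for every $B \in \mathfrak{C}$ and every admissible family $\mathbf{C}$. This immediately yields existence: $M$ is a two-sided identity for $*_{\mathbf{C}}$. Since the ambient text treats this as known, the main content of the theorem is really the uniqueness assertion.

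For uniqueness I would argue as follows. Suppose $E \in \mathfrak{C}$ is any identity element of $(\mathfrak{C},*_{\mathbf{C}})$. Applying the identity property of $E$ to the copula $M$ gives $E *_{\mathbf{C}} M = M$. On the other hand, the cited Example, applied with $B = E$, gives $E *_{\mathbf{C}} M = E$. Comparing these two equalities forces $E = M$, establishing uniqueness. (If one wants only a left or only a right identity, the same trick works using either the relation $E *_{\mathbf{C}} M = E$ or $M *_{\mathbf{C}} E = E$.)

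If the Example were not assumed, one would first verify $M *_{\mathbf{C}} B = B$ by a direct computation: for almost every $t \in [0,1]$ one has $\partial_2 M(x,t) = 1$ for $t < x$ and $\partial_2 M(x,t) = 0$ for $t > x$, so the boundary property $C_t(1,v)=v$ gives
\begin{displaymath}
(M *_{\mathbf{C}} B)(x,y) = \int_0^x C_t\bigl(1, \partial_1 B(t,y)\bigr)\,dt = \int_0^x \partial_1 B(t,y)\,dt = B(x,y),
\end{displaymath}
and the other equality $B *_{\mathbf{C}} M = B$ is symmetric. The only mild subtlety is to note that the values of $\partial_2 M(x,\cdot)$ on the Lebesgue-null set where it is undefined do not affect the integral, so measurability of the integrand, guaranteed by $\mathbf{C} \in \mathcal{M}$, is preserved. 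There is no genuine obstacle here; the argument is essentially a one-line consequence of the behaviour of $M$ combined with the boundary condition $C_t(1,\cdot) = \mathrm{id}$ shared by all copulas.
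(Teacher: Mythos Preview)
Your proposal is correct and essentially the same as the paper's proof. The only cosmetic difference is in the citation used for existence: the paper invokes its own Lemma~\ref{smallest-class} (since $M$ is invertible, $M*_{\mathbf{C}}A=M*A=A$ and likewise on the right), whereas you invoke the Example from \cite{Rem}; your optional direct computation via $\partial_2 M(x,\cdot)\in\{0,1\}$ is precisely the mechanism underlying both cited results, and the uniqueness argument is identical.
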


\begin{proof} Let $\mathbf{C} \in \mathcal{M}$. Since $M$ is invertible, from Lemma \ref{smallest-class}, we have \begin{displaymath} M*_{\mathbf{C}}A=M*A=A=A*M=A*_{\mathbf{C}}M \end{displaymath} for all $A \in \mathfrak{C}$. For the uniqueness, suppose $M'$ is another identity. Then we have \begin{displaymath} M=M*_{\mathbf{C}}M'=M'. \end{displaymath} Hence, for any $\mathbf{C} \in \mathcal{M}$, the copula $M$ is the identity for the $*_{\mathbf{C}}$ product.
\end{proof}

\begin{thm} Let $\mathbf{C} \in \mathcal{M}$. The zero of $(\mathfrak{C},*_{\mathbf{C}})$, if exists, is unique and is the product copula $\Pi$.
\end{thm}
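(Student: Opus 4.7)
The plan is to establish two things separately: uniqueness of the zero, and then that any zero must coincide with $\Pi$.

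For \emph{uniqueness}, I would use the standard absorbing-element argument. If $Z_1$ and $Z_2$ were both zeros of $(\mathfrak{C}, *_{\mathbf{C}})$, then evaluating $Z_1 *_{\mathbf{C}} Z_2$ two different ways, once using that $Z_1$ absorbs on the left and once using that $Z_2$ absorbs on the right, yields $Z_1 = Z_1 *_{\mathbf{C}} Z_2 = Z_2$.

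For \emph{identification}, suppose $Z$ is a zero. The key observation is that for every right-invertible copula $A$, Lemma \ref{smallest-class} gives $A *_{\mathbf{C}} Z = A * Z$, and since $Z$ is a zero of $*_{\mathbf{C}}$, this forces $A * Z = Z$ for every such $A$. I would then specialize to the one-parameter family of straight shuffles $S_{\alpha}$, which are invertible and thus right-invertible. Using the explicit formula from Example \ref{straight} with $C = Z$, the identity $(S_{\alpha} * Z)(u, v) = Z(u, v)$ on the strip $0 \leq u \leq \alpha$ rearranges to
\begin{displaymath}
Z(u + \beta, v) = Z(u, v) + Z(\beta, v)
\end{displaymath}
for every $\beta = 1 - \alpha \in [0, 1]$ and every $u \in [0, 1 - \beta]$, with $v \in [0, 1]$ held fixed.

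Fixing $v$ and setting $f_{v}(u) := Z(u, v)$, the displayed relation is Cauchy's functional equation on the triangle $\{(u, \beta) : u, \beta \geq 0, u + \beta \leq 1\}$. Because every copula is Lipschitz continuous (with constant $1$), $f_{v}$ is continuous, and the only continuous solutions of Cauchy's equation on $[0, 1]$ are linear. The boundary condition $f_{v}(1) = Z(1, v) = v$ then pins down the slope, giving $Z(u, v) = uv = \Pi(u, v)$. The main conceptual step is recognizing that the absorbing property of $Z$, together with Lemma \ref{smallest-class}, converts the algebraic requirement into a functional equation strong enough to determine $Z$ completely; the Cauchy-equation step is then routine thanks to Lipschitz continuity of copulas, and no delicate analysis of $*_{\mathbf{C}}$ applied to non-invertible copulas is needed.
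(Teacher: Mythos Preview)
Your proposal is correct and follows essentially the same route as the paper: uniqueness via the standard absorbing-element argument, then invoking Lemma~\ref{smallest-class} on the invertible straight shuffles $S_\alpha$ to turn $S_\alpha *_{\mathbf{C}} Z = Z$ into Cauchy's functional equation for $u\mapsto Z(u,v)$, solved by continuity. The only difference is cosmetic---the paper also writes down and solves the companion equation coming from the second branch of the $S_\alpha * C$ formula, but as you implicitly observe, the first branch alone already covers the full triangle $\{u,\beta\ge 0,\ u+\beta\le 1\}$ and suffices.
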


\begin{proof} The uniqueness part is easy. Let $U,V$ be zeroes for the $*_{\mathbf{C}}$ product. Then $U=U*_{\mathbf{C}}V=V$. Now, to see that $\Pi$ is the zero, if exists, it requires some work.

Let $U$ be the zero for the $*_{\mathbf{C}}$ product. For each $S_{\alpha}$, the straight shuffle of $M$ at $\alpha \in [0,1]$, since $S_{\alpha}$ is invertible, we have $S_{\alpha}*U= S_{\alpha}*_{\mathbf{C}}U=U$.

Recall the formula
\begin{displaymath}(S_{\alpha}*C)(x,y)=
\left\{
	\begin{array}{ll}
		C(x+1-\alpha,y)-C(1-\alpha,y) & \text{if}~~ 0 \le x \le \alpha \le 1\\
		y -C(1-\alpha,y)+C(x-\alpha,y) & \text{if}~~ 0 \le \alpha \le x \le 1.
	\end{array}
\right.\end{displaymath}
\noindent Then copula $U$ must satisfy the two functional equations
\begin{align}
U(x+1-\alpha,y)&=U(x,y)+U(1-\alpha,y)\quad\text{if}~ 0 \le x \le \alpha \le 1~ \text{and}\label{fe-1}\\
U(x-\alpha,y)+y&=U(x,y)+U(1-\alpha,y)\quad\text{if}~ 0 \le \alpha \le x \le 1.
\label{fe-2}
\end{align}

We will solve the above equations and show that the only copula which satisfies them is the product copula $\Pi$.

Fix $y \in [0,1]$ and let $f(x)=U(x,y)$. Then, from the properties of copulas, $f$ is a continuous mapping on $[0,1]$ with boundary contitions $f(0)=0$ and $f(1)=y$.
Then \eqref{fe-1} and \eqref{fe-2} become
\begin{align}
f(x+1-\alpha)&=f(x)+f(1-\alpha) \quad\text{if}~ 0 \le x \le \alpha \le 1~ \text{and} \label{fe-3}\\
f(x-\alpha)+f(1)&=f(x)+f(1-\alpha) \quad\text{if}~ 0 \le \alpha \le x \le 1. \label{fe-4}
\end{align}

First, we solve \eqref{fe-3}. Let $z=1-\alpha$. Then \eqref{fe-3} becomes the well-known Cauchy equation
\begin{displaymath} f(x+z)=f(x)+f(z)\end{displaymath} where $0\le x \le 1$, $0 \le z \le 1$ and $0 \le x+z \le 1$. Recall that we are solving for a function $f \colon [0,1] \rightarrow [0,1]$. The case where $f$ is a real-valued function and the case where $f$ is a rational-valued function are well-known.

Observe that $f(\frac{m}{n})= f(\frac{m-1}{n})+f(\frac{1}{n})$ for all $m,n \in \mathbb{N}$ such that $1 \le m \le n$. Hence, by induction, we have $f(\frac{m}{n})=mf(\frac{1}{n}).$ Thus $f(1)=nf(\frac{1}{n})$. In other words, $f(\frac{1}{n})= \frac{1}{n}f(1)$ for all $n \in \mathbb{N}$. Therefore $f(\frac{m}{n})=\frac{m}{n}f(1)$ for all $m,n \in \mathbb{N}$ such that $1 \le m \le n$, i.e. $f(r)=rf(1)$ for all $r \in \mathbb{Q}\cap(0,1].$ We know that $f$ is continuous and $f(0)=0$. Hence, for all $x \in [0,1]$, we have \begin{equation} f(x)=xf(1). \label{sol-1}\end{equation}

Now, we solve \eqref{fe-4}. Observe that $f(x-x)+f(1)=f(x)+f(1-x)$. Hence, $f(1-x)=f(1)-f(x)$ for all $x \in [0,1].$ Thus, from \eqref{fe-4}, we have $f(x-\alpha)=f(x)-f(\alpha)$ for all $0 \le \alpha \le x \le 1$. In other words, $f(x)=f(x-\alpha)+f(\alpha)$ for all $0 \le \alpha \le x \le 1$. Again, we have $f(\frac{m}{n})= f(\frac{m-1}{n})+f(\frac{1}{n})$ for all $m,n \in \mathbb{N}$ such that $1 \le m \le n$. This is the same equation as the one we just solved. Hence, for all $x \in [0,1]$, we also have that \begin{equation} f(x)=xf(1). \label{sol-2}\end{equation}

From \eqref{sol-1} and \eqref{sol-2}, we have $U(x,y)=f(x)=xf(1)=xy$ for all $x,y \in [0,1]$. Thus, the only copula which satifies \eqref{fe-1} and \eqref{fe-2} is the product copula $\Pi$.
\end{proof} 

In the following lemma, we obtain a necessary condition for the $*_{\mathbf{C}}$ product to have a zero.

\begin{lemma} If $\mathbf{C} \in \mathcal{M}$ is a family such that $*_{\mathbf{C}}$ has a zero, then \begin{displaymath} \int_0^1C_t(x,y)~dt =xy \end{displaymath} for all $x,y \in [0,1]$.
\label{nc}
\end{lemma}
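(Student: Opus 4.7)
The plan is to reduce the statement to an immediate computation by exploiting the previous theorem, which identifies the zero (if it exists) as the product copula $\Pi$. Under the hypothesis that $*_{\mathbf{C}}$ has a zero, we therefore know the zero is $\Pi$, so by the defining property of a zero we have $\Pi *_{\mathbf{C}} A = \Pi$ for every $A \in \mathfrak{C}$. In particular, taking $A = \Pi$, we get $\Pi *_{\mathbf{C}} \Pi = \Pi$.

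The second step is to unpack the left-hand side using the definition of $*_{\mathbf{C}}$. Since $\Pi(u,v) = uv$, the partial derivatives are constant in the integration variable: $\partial_2 \Pi(x,t) = x$ and $\partial_1 \Pi(t,y) = y$. Substituting these into the integral representation gives
\begin{displaymath}
(\Pi *_{\mathbf{C}} \Pi)(x,y) = \int_0^1 C_t(\partial_2 \Pi(x,t), \partial_1 \Pi(t,y))\, dt = \int_0^1 C_t(x,y)\, dt.
\end{displaymath}

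Combining the two steps yields $\int_0^1 C_t(x,y)\, dt = \Pi(x,y) = xy$ for every $x,y \in [0,1]$. There is no serious obstacle here; the entire content of the lemma is already packaged in the preceding theorem on the uniqueness of the zero, and what remains is just the trivial observation that $\Pi$'s partial derivatives turn the integrand into $C_t(x,y)$ directly.
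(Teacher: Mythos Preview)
Your proof is correct and follows exactly the same approach as the paper: both use the previously established fact that the zero, if it exists, must be $\Pi$, and then compute $\Pi *_{\mathbf{C}} \Pi$ by observing that $\partial_2\Pi(x,t)=x$ and $\partial_1\Pi(t,y)=y$, which reduces the integrand to $C_t(x,y)$.
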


\begin{proof} If $*_{\mathbf{C}}$ has a zero, then $\Pi(x,y)=(\Pi*_{\mathbf{C}}\Pi)(x,y)=\displaystyle\int_0^1C_t(x,y)~dt$.
\end{proof}

Recall from Lemma \ref{pipi} that $\{\Pi*_{\mathbf{C}}\Pi \colon \mathbf{C} \in \mathcal{M}\} =\mathfrak{C}$. Hence, $\Pi*_\mathbf{C}\Pi$ can be any copula. But for the product $*_\mathbf{C}$ to have a zero, $\Pi*_\mathbf{C}\Pi$ can only be the product copula $\Pi$. One can see that, for the product $*_\mathbf{C}$ to have a zero, the underlying family $\mathbf{C}$ must be extremely special.

\begin{ex} Given a copula $C \in \mathfrak{C}$. Let $\mathbf{C}=\{C\}_{t\in[0,1]}$. If $C=\Pi$, then the $*_{\mathbf{C}}$ product is simply the classical $*$-product, which has a zero. If $C \neq \Pi$, then the $*_{\mathbf{C}}$ product has no zero by the above Lemma.
\end{ex}

\begin{ex} Let $\mathbf{C}$ be a family of copulas where $C_t = \Pi$ a.e. $t \in [0,1]$. Then, $*_{\mathbf{C}}$ has a zero since the families $\mathbf{C}$ and $\{\Pi\}_{t \in [0,1]}$ are $*$-equivalent. In fact, $*_{\mathbf{C}}$ is identical to the classical $*$-product.
\end{ex}

\begin{ex} Recall that the Farlie-Gumbel-Morgenstern(FGM) copulas are defined, for $\theta \in [-1,1]$, by $C_{\theta}(u,v)=uv+\theta uv(1-u)(1-v)$. Let $\mathbf{C}=\{C_t\}_{t \in [0,1]}$ where $C_t$ is equal to $C_{\theta}$ if $t \in [0,1/2]$ and is equal to $C_{-\theta}$ otherwise. It is easily seen that the family $\mathbf{C}$ satisfies the condition in Lemma \ref{nc}.
\label{ex-nc}
\end{ex}

We will show that the $*_{\mathbf{C}}$ product in the above example has no zero, which implies that the codition in Lemma \ref{nc} is not a sufficient condition for the product to have a zero.

\begin{ex} Consider a family of copulas $\mathbf{C}$ in Example \ref{ex-nc} where $\theta \neq 0$. Compute
\begin{align*}
(A*_{\mathbf{C}}\Pi)(x,y) &= \int_0^1{C_t(\partial_2 A(x,t),y)}~dt\\
&=  \int_0^{1/2}{C_{\theta}(\partial_2 A(x,t),y)}~dt+ \int_{1/2}^1{C_{-\theta}(\partial_2 A(x,t),y)}~dt\\
&= \int_0^{1/2}\partial_2 A(x,t)y + \theta \partial_2 A(x,t) y (1-\partial_2 A(x,t))(1-y)~dt+\\&~~~\int_{1/2}^1\partial_2 A(x,t)y + \theta \partial_2 A(x,t) y (1-\partial_2 A(x,t))(1-y)~dt\\
&= xy+ \theta y (1-y) \bigg[ \int_0^{1/2} \partial_2 A(x,t)(1-\partial_2 A(x,t))~dt- \\&~~~\int_{1/2}^1 \partial_2 A(x,t)(1-\partial_2 A(x,t))~dt \bigg].
\end{align*}

\noindent Choose $A = C_{\theta}$. From straightforward computation, if $x \notin \{0,1\}$, then \begin{displaymath} \int_0^{1/2} \partial_2 A(x,t)(1-\partial_2 A(x,t))~dt -\int_{1/2}^1 \partial_2 A(x,t)(1-\partial_2 A(x,t))~dt = \frac{x^2}{2\theta x(x-1)} \neq 0. \end{displaymath}

\noindent Thus $C_{\theta}*_{\mathbf{C}}\Pi \neq \Pi$. Therefore, $*_{\mathbf{C}}$ has no zero.

\end{ex}
%%%%%%%%%%%%%%%%%%%%%%%%%%
%\bigskip

%\noindent{\bf Acknowledgements} I would like to thank Assistant Professor Dr. Songkiat Sumetkijakan for his suggestions and helpful advice in preparing and writing this paper.

\newpage
%%%%%%%%%%%%%%%%%%%%%%%%%%


\begin{thebibliography}{99}

%%%% Please order the surname of the first author alphabetically


%Reference to a journal publication:

%\bibitem{chen} L.H.Y.~Chen  and Q-M.~Shao,  A Non-Uniform Berry-Esseen Bound via Stein's Method,
%{\it Probab.~Theory Relat.~Fields}, {\bf 120}(2001), 236--254.


%\bibitem{m49} K.~Mahler, On a theorem of Liouville in fields of positive characteristic, {\it Canadian J.~Math.},
%{\bf 1}(1949), 397--400.


%Reference to a book:

%\bibitem{Matsumura} H.~Matsumura,  \textit{Commutative ring theory}, (3rd ed.), Cambridge University Press, 1992.
%{\it Probab.~Theory Relat.~ields}, {\bf 120}(2001), 236--254.


%Reference to a web source:

%\bibitem{web1} J.~Smith, One of Volvo's core values. [Online] Available:

%\texttt{http://www.volvo.com/environment/index.htm},
%(July 7, 1999).
\bibitem{Da} W.F.~Darsow,  B.~Nguyen and E.T.~Olsen,  Copulas and Markov processes,
{\it Illinios J.~Math.}, {\bf 36}(1992), 600--642.

\bibitem{Norm} W.F.~Darsow and E.T.~Olsen,  Norms for Copulas,
{\it Int. J.~Math. Math.~Sci.} {\bf 18(3)}(1995), 417--436.

\bibitem{Fre} F.~Durante,  E.P.~Klement and J.J.~Quesada-Molina,  Copulas: compatibility and Fr\'{e}chet classes. [Online] Available:
\texttt{http://arxiv.org/abs/0711.2409v1},
(November 15, 2007).

\bibitem{Rem} F. Durante, E.P. Klement, J.J. Quesada-Molina, P. Sarkoci, Remarks on two product-like constructions for copulas, {\it Kybernetika (Prague)}, {\bf 43(2)}(2007), 235-244.

\bibitem{Du} F. Durante, P. Sarkoci, C. Sempi, Shuffles of copulas, {\it J. Math. Anal. Appl.}, {\bf 352}(2009), 914-921.

\bibitem{Ne} R.B.~Nelsen, \textit{An Introduction to Copulas}, (2nd ed.), Springer, 2006.

\bibitem{Sa} T.~Santiwipanont and S.~Sumetkijakan,  Mutual complete dependence copulas and the $*$-product, preprint.

\end{thebibliography}
\end{document}